\newcommand{\subtitle}[1]{%
  \posttitle{%
    \par\end{center}
    \begin{center}\large#1\end{center}
    \vskip0.5em}%
}
\newcommand{\R}{\mathbb R}
\newcommand{\Z}{\mathbb{Z}}
\newcommand{\Leg}{\mathcal{L}}
\newcommand{\const}{\text{const.}}
\newcommand{\Area}{\text{Area}}
\newtheorem{cor}{Corollary}[section]
\newtheorem{prop}[cor]{Proposition}
\newtheorem*{prop*}{Proposition}
\newtheorem{thm}[cor]{Theorem}
\newtheorem{rmk}[cor]{Remark}
\title{A Finsler Geodesic Flow On $T^2$ With Positive Metric Entropy}
\author{Stefan Klempnauer\thanks{stefan.klempnauer@rub.de}}
\affil{Faculty of Mathematics, Ruhr-University Bochum}
\begin{document}
\maketitle

\begin{abstract}
\noindent We use a theorem of P. Berger and D. Turaev to construct an example of a Finsler geodesic flow 
on the 2-torus with a transverse section, such that its Poincar\'e return map has positive metric entropy. The 
Finsler metric generating the flow can be chosen to be arbitrarily $C^\infty$-close to a flat metric. 

\end{abstract}

The author thanks the SFB CRC/TRR 191 \emph{Symplectic Structures in Geometry, Algebra and Dynamics} of the DFG and the Ruhr-University Bochum for the funding of his research.

\section{Introduction}

We give an example of a Finsler geodesic flow on the 2-torus that exhibits aspects of both chaotic and
integrable dynamics. The flow exhibits integrable behaviour in the sense that a a large region of the unit tangent
bundle is foliated by invariant tori, on each of which the geodesic flow is a linear flow. 
It is chaotic in the sense that its return map of a certain 
transverse section contains a stochastic island, i.e. a region of positive metric entropy. \\

In section 2 we use a result of P. Berger and D. Turaev to obtain a perturbation of the standard shear
map of the cylinder with positive metric entropy. In section 3 this map is embedded as a return map of the 
geodesic flow on the unit tangent bundle $ST^2$. 

\section{A twist map $\hat f$ with positive metric entropy}

Let $(M, \omega)$ be a surface with a smooth area form. Let $f: M \to M$ be a diffeomorphism. The 
\emph{maximal Lyapunov exponent} of $x \in M$ is given by 
$$
\lambda(x) = \limsup_{n \to \infty} \frac{1}{n} \log \Vert Df^n(x) \Vert
$$
If $f$ preserves $\omega$ then we have that the \emph{metric entropy} of $f$ is given by
$$
h_\omega(f) := \int_M \lambda(x) d\mu_\omega
$$

From \cite{bergerturaev19} we have the following theorem 

\begin{thm} \emph{(Berger, Turaev '19)}\label{bergerturaev}
Let $(M, \omega)$ be a surface with a smooth area form. If $f: M \to M$ is a smooth area-preserving 
diffeomorphism with a non-hyperbolic periodic point, then there is an arbitrarily $C^\infty$-small
perturbation of $f$, such that the perturbed map $\hat f$ is a smooth, area-preserving diffeomorphism and 
has positive metric entropy $h_\omega(\hat f) > 0$.  

\end{thm}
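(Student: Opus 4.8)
The plan is to push everything into a small neighbourhood of the non-hyperbolic periodic point, perturb it into a convenient elliptic fixed point, and then use the ``universality'' of the dynamics near such a point (renormalized return maps realize arbitrary local area-preserving maps) to graft in, at a small scale, a rescaled copy of a model map that is already known to have positive metric entropy.

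First I would normalize near the periodic point. Let $p$ be the non-hyperbolic periodic point, of period $k$; replacing $f$ by $f^k$ and passing to a chart, I may assume $p$ is a fixed point at the origin of a disk and $f$ preserves $\omega$ there. All perturbations below will be supported in a neighbourhood of $p$ alone (disjoint from the rest of the orbit), so they correspond to genuine $C^\infty$-small perturbations of $f$ itself. Since $\det Df(p)=1$ and $p$ is non-hyperbolic, both eigenvalues of $Df(p)$ lie on the unit circle (a conjugate pair $e^{\pm i\theta}$ or $\pm1$, possibly with a Jordan block). By an arbitrarily $C^\infty$-small perturbation near $p$ — for instance post-composing with the time-$\tau$ map of a small Hamiltonian, or perturbing a generating function — I can make $Df(p)=R_\theta$ with $\theta/2\pi$ Diophantine and, after a further small perturbation, arrange the Birkhoff normal form of $f$ at $p$ to be non-degenerate (nonzero first twist coefficient). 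By KAM, $p$ is then a stable elliptic fixed point surrounded by arbitrarily small invariant circles, so $f$ (still called $f$) has a nested sequence of invariant annuli shrinking onto $p$, on each of which it is a small perturbation of a rigid rotation.

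Next I would invoke the renormalization/universality phenomenon near such an elliptic point (in the spirit of Turaev's work on universal maps): by arbitrarily $C^\infty$-small perturbations of $f$ supported in a tiny neighbourhood of $p$, one can arrange that a suitable high iterate $\hat f^{\,N}$ of the perturbed map $\hat f$, restricted to a small disk $D$ near $p$ and rescaled to unit size, reproduces — up to any prescribed $C^r$ accuracy, and ideally exactly, at the cost of taking $N$ large, the scale small, and fine-tuning the perturbation — any prescribed area-preserving diffeomorphism $g$ of the unit disk that equals the identity near the boundary, with the rescaled domain (essentially) invariant under this renormalized return dynamics. As the model $g$ I would take one of Katok's $C^\infty$ area-preserving diffeomorphisms of the disk (identity near the boundary) with positive metric entropy — in fact with a positive-Lebesgue-measure set of points carrying nonzero Lyapunov exponents. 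If the rescaled copy of $g$ sits genuinely as a subsystem of $\hat f$ inside the invariant set $\bigcup_j \hat f^{\,j}(D)$, then, since the maximal Lyapunov exponent is nonnegative everywhere, $h_\omega(\hat f)=\int_M \lambda\, d\mu_\omega \ge \int_{\mathrm{copy}} \lambda\, d\mu_\omega \ge c\, h_\omega(g) > 0$ for a positive rescaling constant $c$; and because every perturbation was localized near $p$ and taken $C^\infty$-small, $\hat f$ is $C^\infty$-close to $f$.

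The hard part is robustness of positive metric entropy, and this is exactly why the statement is a theorem of Berger and Turaev rather than a short exercise. Positive metric entropy is not structurally stable — unlike positive topological entropy, which a single horseshoe witnesses, it is an $L^1$-quantity over a positive-measure set whose hyperbolic (Pesin) structure can be destroyed by an arbitrarily small perturbation; relatedly, metric entropy is not lower semicontinuous in the $C^\infty$ topology, so one cannot simply pass to a $C^\infty$-limit of rescaled Katok maps and hope the limit still has positive entropy. Consequently one must either (i) realize the model $g$ \emph{exactly} as a renormalized return map, which forces an infinite coefficient-matching / multi-scale construction near $p$ with quantitative control of all scales so that the resulting perturbation is genuinely $C^\infty$ and genuinely $C^\infty$-close to $f$, or (ii) replace Katok's map by a model whose positive metric entropy is produced by a $C^r$-robust mechanism (a dominated or partially hyperbolic structure on a large-measure set) and carry out the grafting at infinitely many nested scales near $p$, with the stage-$(n+1)$ perturbation supported away from everything built before, so that the surviving hyperbolic measure adds up to a definite positive amount. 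Making this multi-scale bookkeeping converge, while keeping the renormalization estimates under control, is the technical core on which I would expect to spend essentially all of the effort.
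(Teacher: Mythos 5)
The paper does not prove this theorem; it is quoted from \cite{bergerturaev19}, and the only indication of the argument is the brief sketch in Remark~\ref{btrmk}: Berger and Turaev perturb $f$ locally along the orbit of the periodic point so as to produce an \emph{invariant} domain $I$ of positive measure on which the maximal Lyapunov exponent is everywhere positive, with the boundary of $I$ consisting of finitely many $C^0$-embedded circles contained in the union of the stable and unstable manifolds of a finite set of hyperbolic periodic points. Measured against that, your sketch has the right shape --- localize near the periodic point, then use renormalization/universality to graft in a rescaled positive-entropy model --- and your third paragraph correctly isolates what makes the statement genuinely hard: positive metric entropy has no structurally stable witness, so the construction must yield an actually invariant positive-measure set rather than a $C^\infty$-limit of ever-better approximations.

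As a proof, though, there are two real gaps. First, the universality claim in your second paragraph --- that arbitrarily small $C^\infty$-perturbations make a renormalized high-iterate return map of $\hat f$ coincide \emph{exactly} with an arbitrary prescribed area-preserving disk map on an invariant rescaled domain --- is itself a substantial theorem, not a cite-and-go fact, and it is not clear that it applies in the elliptic-Diophantine, KAM-stable normal form you set up in the first paragraph. The remark's emphasis on \emph{hyperbolic} periodic points and their invariant manifolds, which supply the boundary of $I$, indicates that the operative local configuration in Berger--Turaev is one of saddles and homoclinic/heteroclinic tangencies; you do not explain how to reach that configuration from your normalization, nor why the renormalized return dynamics there realizes the model exactly. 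Second, you explicitly offer two escape routes (exact multi-scale realization, or a robust hyperbolicity mechanism) and carry out neither; the whole theorem hinges on completing one of them, so what you have is a correct diagnosis of the difficulty together with two black boxes where the actual content should go.
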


\begin{rmk} \label{btrmk}
The theorem is proved in \cite{bergerturaev19} by perturbing the diffeomorphism $f$ locally along the orbit of the 
periodic point, such that one obtains an invariant domain $I \subset M$ of positive measure with
positive maximal Lyapunov exponent at every point in $I$. 
Since the perturbation is local the perturbed map $\hat f$ agrees with $f$ away from the orbit of the periodic 
point. The boundary of $I$ consists of finitely many $C^0$-embedded circles that lie in the union of
the stable and unstable manifolds of a set of hyperbolic periodic points. In the spirit of \cite{bergerturaev19} we
call such a set $I$ a 
\emph{stochastic island}.
\end{rmk}

Let $Z = S^1 \times \R$ be the cylinder equipped with the standard symplectic form $dx \wedge dy$. 
The \emph{shear map} $f_1: Z \to Z$ with $(x,y) \mapsto (x+y \mod 1, y)$ is the simplest example of a 
\emph{twist map} of the cylinder. Note that $f_1$ has a non-hyperbolic fixed point at $(0,0)$. 
From theorem \ref{bergerturaev} we obtain an area-preserving (i.e. symplectic because of dimension two) 
$C^\infty$ diffeomorphism $\hat f: Z \to Z$ with a stochastic island $I$, which is arbitrarily close to $f_1$ in 
$C^\infty$. 
The twist of $f_1$ is uniformly equal to 1. Thus, if $\hat f$ is $C^1$-close enough to $f_1$ it has bounded twist
away from zero.
Note that if $\hat f$ is $C^3$-close enough to $f_1$ then KAM-theory guarantees that $\hat f$ possesses 
invariant essential circles and consequently has zero flux (see \cite{herman83}). 
Consequently, $\hat f$ is a twist map of the cylinder if it is close enough to $f_1$ in $C^r$ for $r > 3$.\\

As mentioned in remark \ref{btrmk} there is a $K > 0$, such that 
$\hat f (x,y) = f_1(x,y)$ for every $|y| > K$.

\section{Embedding $\hat f$ into the geodesic flow}

We identify the tangent bundle $TT^2$ with $T^2 \times \R^2$. A \emph{Finsler metric} on $T^2$ is a map 
$$
F : TT^2 \to [0,\infty)
$$
with the following properties
\begin{enumerate}
\item (Regularity) $F$ is $C^\infty$ on $TT^2 - 0$ 
\item (Positive homogeneity) $F(x,\lambda y) = \lambda \cdot F(x,y)$ for $\lambda > 0$
\item (Strong convexity) The hessian 
$$
(g_{ij}(x,v)) = \left(\partial_{v_iv_j} \frac{1}{2} F(x,v)^2 \right)
$$
is positive-definite for every $(x,v) \in TT^2 - 0$. 
\end{enumerate}
A Finsler metric is called \emph{reversible} if $F(x,v) = F(x,-v)$ for every $(x,v) \in TT^2$. The 
\emph{unit tangent bundle} $ST^2 \cong T^2 \times S^1$ is given by $ST^2 = F^{-1}(\{1\})$.\\

The \emph{geodesic flow} $\phi^t: ST^2 \to ST^2$ is the restriction of the Euler-Lagrange flow of the 
Lagrangian $L_F$, with 
$$
L_F = \frac{1}{2}F^2
$$
to the unit tangent bundle. A \emph{geodesic} we call either a trajectory of the Euler-Lagrange flow, or 
its projection to $T^2$, i.e. a geodesic is a curve $t \mapsto c(t) \subset T^2$ satisfying the Euler-Lagrange equation
$$
\partial_x L_F(c, \dot c) - \partial_t(\partial_v L_F(c, \dot c)) = 0
$$

To embed the map $\hat f$ into the geodesic flow we use a theorem of Moser \cite{moser86} to express 
$\hat f$ as the time-1 map of a strictly convex, time-periodic Hamiltonian on $S^1$. 

\begin{thm} \emph{(J. Moser '86)} \label{moserthm} Given a $C^\infty$ twist map $f: Z \to Z$ with $f(x,y) = (x + c \cdot y, y)$ for
large $|y|$, there exists a strictly convex, time-periodic Hamiltonian $H$ on $S^1$, such that the time-1 map 
$\psi^{0,1}_H: Z \to Z$ agrees with $f$. 
\end{thm}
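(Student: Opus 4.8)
The plan is to prove Moser's theorem (Theorem~\ref{moserthm}) by combining two classical ingredients: a generating-function description of the twist map and an interpolation argument that realizes a given symplectomorphism as the time-one map of a suitable Hamiltonian isotopy. The statement we are asked to establish says that a $C^\infty$ twist map $f$ of the cylinder $Z = S^1 \times \R$ which coincides with the linear shear $(x,y) \mapsto (x + cy, y)$ for large $|y|$ can be written as $f = \psi^{0,1}_H$ for a strictly convex, time-periodic Hamiltonian $H$ on $S^1$. Here ``strictly convex'' should be read as: $H = H(t,x,y)$ with $\partial^2_{yy} H > 0$, so that the associated Euler--Lagrange system is a genuine (positive-definite) variational problem, which is what section~3 needs in order to interpret trajectories as geodesics.

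First I would recall that a $C^\infty$ twist map of $Z$ admits a $C^\infty$ generating function: there is $h(x,X)$, defined on the universal cover and periodic in the sense $h(x+1,X+1) = h(x,X)$, with $\partial^2_{xX} h < 0$, such that $f(x,y) = (X,Y)$ iff $y = -\partial_x h(x,X)$ and $Y = \partial_X h(x,X)$. Because $f$ agrees with the linear shear for $|y| > K$, the generating function $h$ agrees with the generating function $\tfrac{1}{2c}(X-x)^2$ of that shear outside a compact range of $X - x$. Next, following Moser, I would build a \emph{family} of twist maps $f_s$, $s \in [0,1]$, interpolating between $f_0 = \mathrm{id}$ (or rather the shear, handled separately) and $f_1 = f$, by interpolating the generating functions: $h_s = (1-s) h_0 + s\, h$ with $h_0$ the shear generating function. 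Each $h_s$ still satisfies $\partial^2_{xX} h_s < 0$ (the twist condition is convex in $h$) and still agrees with the shear's generating function for large argument, so each $f_s$ is a twist map equal to the shear near infinity; moreover $s \mapsto f_s$ is smooth and $f_s = \mathrm{id}$-related to the shear for $|y|$ large. From this isotopy one extracts a time-dependent vector field $X_s = \frac{d}{ds} f_s \circ f_s^{-1}$, which is symplectic; because $Z$ is an open cylinder and the isotopy is the identity shear near infinity, $X_s$ is in fact Hamiltonian, generated by a function $K_s(x,y)$ normalized to vanish where $f_s$ is the shear. Reparametrizing $s$ as a time variable and adding the quadratic term $\tfrac{c}{2} y^2$ coming from the shear part yields a Hamiltonian whose time-one map is $f$.

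The delicate point — and the step I expect to be the main obstacle — is arranging \emph{strict convexity in $y$}, i.e. $\partial^2_{yy} H > 0$ everywhere, rather than merely getting some Hamiltonian with the right time-one map. This is exactly where the twist hypothesis is used: the twist condition $\partial^2_{xX} h_s < 0$ translates, through the Legendre-type duality between generating functions and Hamiltonians, into convexity of the Hamiltonian in the momentum variable, but only after a careful choice of the time parametrization and possibly after composing the isotopy with shears of varying slope so that the ``instantaneous twist'' stays positive and bounded along the whole family. Concretely I would choose the path $f_s$ so that for each $s$ the map is a twist map with twist bounded below by a positive constant (using that $f$ itself has twist bounded away from zero, as recorded in section~2, and that the linear shear has constant twist $c$), and then verify that the generating function of the ``infinitesimal'' step at time $s$ is uniformly twist-positive; the Hamiltonian reconstructed from these infinitesimal generating functions inherits $\partial^2_{yy}H > 0$. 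One must also check the behavior at $|y| \to \infty$: there $H$ should reduce to $\tfrac{c}{2}y^2$ (whose time-one map is the shear), which is manifestly strictly convex, and the compact-support nature of the perturbation guarantees $H - \tfrac{c}{2}y^2$ is compactly supported in $y$, so strict convexity only has to be checked on a compact set and then follows by taking the perturbation small relative to $c$ — which is legitimate because $\hat f$ is taken $C^\infty$-close to the shear. Finally I would note that all constructions are $C^\infty$ and $1$-periodic in $x$, and that the time variable can be taken in $\R/\Z$ after a standard cutoff reparametrization of $s$, so $H$ is indeed a time-periodic Hamiltonian on $S^1$ with $\psi^{0,1}_H = f$.
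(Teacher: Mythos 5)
The paper does not actually prove Theorem~\ref{moserthm}; it is quoted from Moser's 1986 paper, and the only additional content supplied is the remark describing how the twist map is extended to the cylinder and what $H$ looks like for large $|y|$. So the comparison is really between your sketch and Moser's proof in \cite{moser86}.

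Your overall framework --- describe the twist map by a generating function $h(x,X)$, build an isotopy $f_s$ interpolating the identity/shear to $f$, and recover a time-dependent Hamiltonian from the isotopy --- is the right family of ideas and is in the spirit of Moser's argument. But the concrete interpolation you propose, $h_s = (1-s)h_0 + s\,h$, does not do the job, and this is where the proof genuinely breaks down. The linear interpolation does guarantee that each $f_s$ is a twist map of the cylinder, since $\partial^2_{xX}h_s < 0$ is preserved. The problem is that the quantity which is Legendre-dual to the convexity $\partial^2_{yy}H(s,\cdot,\cdot) > 0$ is not the twist of $f_s = \phi^{0,s}$, but the twist of the \emph{short-time} transition maps $\phi^{s,s+\varepsilon}$ of the isotopy, uniformly in $s$ for small $\varepsilon$. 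Equivalently, the generating functions $h^t$ of $\phi^{0,t}$ must solve a Hamilton--Jacobi-type relation $\partial_t h^t(x,X) + H\bigl(t,X,\partial_X h^t(x,X)\bigr) = 0$ with a fixed convex $H$, and this is a genuine constraint: a smooth path of twist maps chosen by convex combination of generating functions will generically fail it, and the Hamiltonian you then extract from $X_s = \dot f_s \circ f_s^{-1}$ has no reason to satisfy $\partial^2_{yy}H > 0$. You do flag exactly this issue as ``the delicate point,'' but the remedy you offer (``careful choice of the time parametrization,'' ``composing with shears of varying slope'') is a description of what would need to be true rather than a construction that makes it true. Moser's proof handles this by building $h^t$ not as a naive interpolant but as the solution of the appropriate evolution problem (degenerating correctly as $t\to 0^+$, agreeing with $h$ at $t=1$), and verifying directly that the Legendre transform of the resulting infinitesimal data is positive definite; that computation is the content of the theorem and is absent from your sketch.

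A secondary issue: towards the end you fall back on ``strict convexity follows by taking the perturbation small relative to $c$, which is legitimate because $\hat f$ is taken $C^\infty$-close to the shear.'' That would restrict the argument to twist maps near the shear, which is enough for the application in this paper but is strictly weaker than the statement of Theorem~\ref{moserthm}; Moser's theorem imposes no such smallness hypothesis. If you are content proving the weaker near-shear version, say so explicitly; otherwise the convexity still needs an argument that does not rely on smallness.
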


\begin{rmk}
Moser's original theorem is formulated for twist maps $f$ on the closed annulus $A = S^1 \times [0,1]$. In
the proof in \cite{moser86} the map $f$ is extended to a twist map on the cylinder $Z$ with $f(x,y) = (x + c \cdot y, y)$ for
$|y| > D$ for a positive constant $D$. There exist constants $D_+, D_- \in \R$, such that the Hamiltonian $H$ is equal to 
$\tfrac{1}{2}y^2 + D_\pm$ for large values of $|y|$, depending on whether $y > D$  or $y < -D$. 
\end{rmk}

Let $H$ be the Hamiltonian obtained from theorem \ref{moserthm} generating the previously constructed twist map $\hat f$. We lift $H$ to 
obtain a $e_1 \cdot \Z$-periodic Hamiltonian $H$ on $\R^2$. The Legendre transformation 
$\Leg_t: \R^2 \to \R^2$ is a global diffeomorphism and agrees with the identity for large values of $|y|$. 
Thus, we obtain an associated Lagrangian $\hat L: S^1 \times \R^2 \to \R$ with 
$$
\hat L(t,x,y) = \frac{1}{2} y^2 - D_\pm \quad \text{for } |y| > D
$$

Observe that the time-dependent Euler-Lagrange flow of $\hat L$ is complete (i.e. the 
Euler-Lagrange solutions exist for all times) because the sets $\{ y = \const \}$ are invariant 
for large $|y|$.
To embed $\hat L$ into a Finsler metric we need to perturb it for large values of $|y|$. Let $h_+, h_-: \R \to \R$
be smooth functions with 
$$
h_+(y) = 
\left\{
	\begin{array}{ll}
		\tfrac{1}{2}y^2 - D_+  & \mbox{if } y < D+1 \\
		\sqrt{A + By^2} & \mbox{if } y > D+2
	\end{array}
\right.
$$
and
$$
h_-(y) = 
\left\{
	\begin{array}{ll}
		\sqrt{A + By^2}  & \mbox{if } y < -D -2 \\
		\tfrac{1}{2}y^2 - D_- & \mbox{if } y > -D-1
	\end{array}
\right.
$$
With constants $A,B > 0$ chosen in such a way that it is possible to choose $h_+, h_-$ with 
$h_\pm'' > 0$. 
We define a Lagrangian $L$ via 
$$
L(t,x,y) = 
\left\{
	\begin{array}{ll}
		h_-(y) & \mbox{if } y < -D \\
		\hat L(t,x,y) & \mbox{if } -D \leq y \leq D\\
		h_+(y) & \mbox{if } y > D
	\end{array}
\right.
$$
Observe that the time-dependent flow of $\hat L$ is the same as the time-dependent flow of $L$ since 
$\hat L$ and $L$ only differ where they are both only dependent on $y$. 
Let $F_0$ be a Finsler metric on $T^2$ given by 
$$
F_0(t,x,v_1,v_2) = \sqrt{Av_1^2 + Bv_2^2}
$$
The Lagrangian $L$ is chosen in such a way that $L(t,x,y) = F_0(t,x,1,y)$ for large values of $|y|$.
We define a map $F$ on $T^2$ via
$$
F(t,x,v_1,v_2) = 
\left\{
	\begin{array}{ll}
		v_1 \cdot L(t,x,\tfrac{v_2}{v_1})  & \mbox{if } v_1 > 0 \\
		F_0(t,x,v_1,v_2) & \mbox{if } v_1 \leq 0
	\end{array}
\right.
$$

\begin{rmk}

From the proof of Moser's theorem \ref{moserthm} it follows that if $\hat f$ is chosen $C^\infty$-close to 
the shear map then the obtained Hamiltonian will be $C^\infty$-close to $H = \tfrac{1}{2} y^2$.  Consequently, 
the above obtained Lagrangian $L$ can be chosen to be $C^\infty$-close to a function $h$ only dependent
on $y$ with 
$$
h(y) = 
\left\{
	\begin{array}{ll}
		\tfrac{1}{2}y^2 & \mbox{if } |y| < D + 1 \\
		\sqrt{A + By^2} & \mbox{if } |y| > D +2
		
	\end{array}
\right.
$$
Thus, for any compact subset $K$ of $TT^2$  we can find a sequence $\hat f_i$ of twist maps converging
to $f_1$ and do the above construction, such that the resulting Finsler metrics $F_i$ become 
arbitrarily $C^\infty$-close to the flat metric 

$$
\bar F(t,x,v_1,v_2) = 
\left\{
	\begin{array}{ll}
		v_1 \cdot h(\tfrac{v_2}{v_1})  & \mbox{if } v_1 > 0 \\
		F_0(t,x,v_1,v_2) & \mbox{if } v_1 \leq 0
	\end{array}
\right.
$$
on the set $K$. 

\end{rmk}

The following two propositions are due to J.P. Schr\"oder \cite{schroeder13}. We include their proofs for 
completeness.

\begin{prop}
$F$ defines a $C^\infty$ Finsler metric on $T^2$. 
\end{prop}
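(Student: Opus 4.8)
The plan is to verify the three Finsler axioms in turn, treating the base coordinates $(t,x)\in T^2$ as spectator parameters and writing $\ell_{t,x}(y):=L(t,x,y)$ for the fibre-restricted Lagrangian. Positive homogeneity is immediate: for $v_1>0$ and $\lambda>0$ we have $F(t,x,\lambda v_1,\lambda v_2)=\lambda v_1\,\ell_{t,x}(v_2/v_1)=\lambda F(t,x,v_1,v_2)$, while for $v_1\le 0$ the function $F_0$ is $1$-homogeneous because it is the norm of a quadratic form. For regularity, $F$ is manifestly $C^\infty$ on the open set $\{v_1>0\}$ (a composition of smooth maps, with $v_1\ne 0$) and on $\{v_1<0\}$ (where it equals $F_0$, smooth off the zero section). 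The only delicate point is the gluing locus $\{v_1=0\}\setminus 0$. Here I would use that $\ell_{t,x}(y)=\sqrt{A+By^2}$ for $|y|>D+2$: on the cone $\{v_1>0,\ |v_2|>(D+2)v_1\}$ this gives $v_1\,\ell_{t,x}(v_2/v_1)=\sqrt{Av_1^2+Bv_2^2}=F_0$, so $F\equiv F_0$ on an open neighbourhood of $\{v_1\le 0\}\setminus 0$, and smoothness there follows from that of $F_0$.

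The substance of the proposition is strong convexity. On $\{v_1\le 0\}$ we have $\tfrac12F^2=\tfrac12(Av_1^2+Bv_2^2)$, whose fibre-Hessian is the positive-definite matrix $\mathrm{diag}(A,B)$. On $\{v_1>0\}$ I would compute the fibre-Hessian of $\tfrac12F^2=\tfrac12 v_1^2\,\ell(v_2/v_1)^2$ directly. Writing $E:=\tfrac12\ell^2$ and $s:=v_2/v_1$, a short computation yields the matrix
$$
\begin{pmatrix} 2E-2sE'+s^2E'' & E'-sE''\\ E'-sE'' & E'' \end{pmatrix}\bigg|_{s=v_2/v_1},
$$
whose determinant collapses to $2EE''-(E')^2=\ell^3\ell''$ and whose lower-right entry is $E''=(\ell')^2+\ell\,\ell''$. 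Hence this Hessian is positive definite at every fibre point with $v_1>0$ precisely when $\ell_{t,x}>0$ and $\ell_{t,x}''=\partial_{yy}L>0$ pointwise; combined with the $\{v_1\le 0\}$ case and the fact that $\tfrac12F^2$ coincides with $\tfrac12 F_0^2$ on a neighbourhood of $\{v_1\le 0\}\setminus 0$, this gives positive-definiteness of the Hessian of $\tfrac12F^2$ everywhere on $TT^2\setminus 0$.

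It therefore remains to check $L>0$ and $\partial_{yy}L>0$. For $|y|\le D$ we have $L=\hat L$, the Legendre dual of $H$; strict convexity of $H$ in its momentum variable transfers to $\partial_{yy}\hat L>0$. Positivity of $\hat L$ is not automatic, but adding a constant to $H$ leaves Hamilton's equations (hence $\hat f$) and the convexity untouched while shifting $\hat L$ by that constant, so we may assume $\hat L\ge\varepsilon>0$ on the compact set $\{|y|\le D\}$; the same shift makes the constants $D_\pm$ small enough that $\tfrac12 y^2-D_\pm>0$ for $|y|\ge D$. For $|y|>D$, $L=h_\pm$, built with $h_\pm''>0$ by construction; on the short transition interval $h_\pm$ interpolates between the positive functions $\tfrac12y^2-D_\pm$ and $\sqrt{A+By^2}$, and with $A$ (hence the interpolation) chosen suitably it stays positive throughout. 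Finally $L$ is genuinely $C^\infty$ in $y$ across $y=\pm D$, since $\hat L$ and $h_\pm$ both equal $\tfrac12 y^2-D_\pm$ near $y=\pm D$ and so agree to infinite order there.

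I expect the only place requiring care is this last bookkeeping — arranging simultaneously the positivity of the Legendre dual, the smooth fit across $y=\pm D$, and the positivity and strict convexity of the interpolating pieces through the choice of $A,B,D_\pm$. The Hessian identity $\det=\ell^3\ell''$ is a routine computation, but it is exactly what makes the convexity step transparent.
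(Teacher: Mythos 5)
Your argument is correct and differs from the paper's in the convexity step, where the difference is worth noting. The paper works with the auxiliary function $f(v_1,v_2)=v_1\,l(v_2/v_1)$, decomposes $\mathrm{Hess}\,L_F = A + f\cdot\mathrm{Hess}\,f$ with $A$ a rank-one matrix, evaluates the resulting quadratic form as $(Df(u)v)^2 + f(u)\bigl(v_1\tfrac{u_2}{u_1}-v_2\bigr)^2\tfrac{l''(u_2/u_1)}{u_1}$, and handles the degenerate case $v\parallel u$ by an Euler-homogeneity argument. You instead compute the $2\times 2$ fibre-Hessian of $\tfrac12F^2 = v_1^2 E(v_2/v_1)$ directly, obtain $\det = 2EE'' - (E')^2 = l^3 l''$ and the diagonal entry $E'' = (l')^2 + l\,l''$, and invoke Sylvester's criterion. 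Both reduce to $l>0$, $l''>0$; yours is arguably cleaner because the determinant identity $2EE''-(E')^2 = l^3 l''$ makes the sufficiency of $l,l''>0$ immediate, whereas the paper has to argue separately about when the two nonnegative summands can vanish simultaneously. You are also more careful on two points the paper dispatches in one line: (i) smoothness across $\{v_1=0\}$, where your observation that $F\equiv F_0$ on the open cone $\{|v_2|>(D+2)v_1\}$ is exactly the right reason the gluing is $C^\infty$; and (ii) positivity of $L$, which the paper implicitly assumes when it writes ``since $f, l''$ and $u_1$ are $>0$'' — your device of shifting $H$ by an additive constant (which changes neither the time-$1$ map nor strict convexity, only the constants $D_\pm$) is the standard and correct fix, and is a genuine gap-fill relative to the written proof.
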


\begin{proof}
Regularity and Positive homogeneity follow directly from the definition of $F$.

We check strict convexity in each fiber. Let $(t,x) \in T^2$ be fixed and define 
$f: \R_{>0} \times \R \rightarrow \R$ via 
$$
f(v_1, v_2) := v_1 \cdot l(v_2/v_1)
$$
where $l(y) := L(t,x,y)$ for every $y \in \R$. 
We compute the derivatives
$$
\partial_1f(u_1,u_2) = l\left( \frac{u_2}{u_1}\right) - \frac{u_2}{u_1} \cdot l' \left( \frac{u_2}{u_1}\right)
$$
$$
\partial_2f(u_1,u_2) =  l' \left( \frac{u_2}{u_1}\right)
$$
second derivatives 
$$
\partial_{11}f(u_1,u_2) = \frac{u_2^2}{u_1^3} \cdot  l'' \left( \frac{u_2}{u_1}\right)
$$
$$
\partial_{12}f(u_1,u_2) = \partial_{21}f(u_1,u_2) = - \frac{u_2}{u_1^2}\cdot  l'' \left( \frac{u_2}{u_1}\right)
$$
$$
\partial_{22}f(u_1,u_2) = \frac{1}{u_1} \cdot l'' \left( \frac{u_2}{u_1}\right)
$$

Consequently, we have for $u = (u_1,u_2) \in \R_{>0} \times \R$ and $v = (v_1,v_2) \in \R^2$
$$
\langle v , Hess f(u)v \rangle = \left(v_1 \cdot \frac{u_2}{u_1}  - v_2\right)^2 \cdot \frac{l''(u_2/u_1)}{u_1}
$$

To see that $F$ is strictly convex we have to check that fiberwise the Hessian of $L_F = \frac{1}{2} F^2$ is 
positive definite. For $(t,x) \in T^2$ fixed let $L_F: T_{(t,x)}T^2 \to \R$ be given by $L_F(u) = \tfrac{1}{2}F(t,x,u)^2$.
For $u \in \R_{>0} \times \R$ we then have 
$$
L_F(u) = \frac{1}{2}(u_1 \cdot l(u_2/u_1))^2
$$
We compute partial derivatives
$$
\partial_1 L_F(u) = f(u) \cdot \partial_1f(u)
$$
$$
\partial_2 L_F(u) = f(u) \cdot \partial_2f(u)
$$
$$
\partial_{11} L_F(u) = (\partial_1f(u))^2 + f(u) \cdot \partial_{11}f(u)
$$
$$
\partial_{12} L_F(u) = \partial_1f(u)\cdot \partial_2f(u) + f(u) \cdot \partial_{12}f(u)
$$
$$
\partial_{22} L_F(u) = (\partial_2f(u))^2 + f(u) \cdot \partial_{22}f(u)
$$

from this we get that
$$
Hess L_F(u) = A + f(u) \cdot Hess f(u)
$$
where
$$
A = 
\begin{pmatrix}
(\partial_1f(u))^2 & \partial_1f(u)\cdot \partial_2f(u) \\
\partial_1f(u)\cdot \partial_2f(u) & (\partial_2f(u))^2
\end{pmatrix}
$$

For $v \in \R^2$ we compute 
\begin{flalign*}
v^T A v &= v_1^2(\partial_1f(u))^2 + 2v_1v_2\partial_1f(u)\cdot \partial_2f(u) + v_2^2 (\partial_2f(u))^2\\
&= (v_1 \partial_1f(u) + v_2 \partial_2f(u))^2 \\
&= (Df(u)v)^2
\end{flalign*}

Hence we have

\begin{flalign*}
v^T HessL_F(u) v &= v^T (A + f(u) Hess f(u)) v \\
&= v^T A v +  f(u) v^T Hess f(u) v \\
&=\underbrace{(Df(u)v)^2}_a + \underbrace{f(u) \left(v_1 \frac{u_2}{u_1} - v_2\right)^2 \frac{l''(u_2/u_1)}{u_1}}_b
\end{flalign*}

Observe that $a$ and $b$ are each $\geq 0$. Assume now, that $b = 0$. Since $f, l''$ and $u_1$ are $>0$ it follows
that $v_1 \frac{u_2}{u_1} - v_2 = 0$. From this it follows that $v = \lambda \cdot u$ are linearly dependent. In this
case we have $Df(u)v = \lambda \cdot Df(u)u = f(v) >0$. 
Hence the Hessian $Hess L_F(x,u)$ is positive definite for $u \in \R_{>0} \times \R$. It is also positive definite for
$u \in \R_{\leq 0} \times \R - \{0\}$ since $L$ coincides there with the Finsler metric $F_0$.

\end{proof}

\begin{prop} \label{schroeder2}
Let $\theta: \R \to \R$ be a smooth function and let $\gamma: \R \to \R^2$ be the curve given by
$$
\gamma(t) = (t, \theta(t))
$$
Then $\gamma$ is a reparametrization of a lifted $F$-geodesic if and only if $\theta$ is an Euler-Lagrange
solution of $L$ (seen as a Lagrangian lifted to $\R$).
\end{prop}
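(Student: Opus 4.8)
The plan is to push everything through the parametrization‑invariant Finsler length functional. Working on the universal cover $\R^2$, where $F$ lifts to a $\Z^2$‑periodic $C^\infty$ Finsler metric (by the previous proposition), I would first recall the standard Finsler fact — the exact analogue of the Riemannian one, which holds because $F$ is positively $1$‑homogeneous in the fibre: a regular curve is a reparametrization of an $F$‑geodesic (i.e. of a trajectory of the Euler–Lagrange flow of $L_F=\tfrac12F^2$) if and only if it is a critical point, with fixed endpoints, of the length functional $\mathcal{A}_F(c):=\int F(c(s),\dot c(s))\,ds$; moreover $\mathcal{A}_F$ is invariant under orientation‑preserving reparametrizations. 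Since $\dot\gamma=(1,\dot\theta)$ never vanishes, it then suffices to prove that $\gamma$ is a critical point of $\mathcal{A}_F$ under endpoint‑fixing variations if and only if $\theta$ solves the Euler–Lagrange equation of $L$.

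The first key observation is that along a graph curve the length functional \emph{is} the $L$‑action: because the first component of $\dot\gamma$ equals $v_1=1>0$, the definition of $F$ gives $F(\gamma(t),\dot\gamma(t))=1\cdot L(t,\theta(t),\dot\theta(t))$, hence
$$
\mathcal{A}_F(\gamma)\;=\;\int L(t,\theta(t),\dot\theta(t))\,dt\;=:\;\mathcal{A}_L(\theta).
$$
This region is harmless analytically, since along $\gamma$ and nearby curves $\dot\gamma$ stays in $\R_{>0}\times\R$, where $F(t,x,v_1,v_2)=v_1L(t,x,v_2/v_1)$ is $C^\infty$.

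The second key step is to transport variations between the two pictures. Given a variation field $V(t)=(a(t),b(t))$ along $\gamma$ with $V(t_0)=V(t_1)=0$, I would form $\gamma_\varepsilon(t)=(t+\varepsilon a(t),\,\theta(t)+\varepsilon b(t))$; for small $\varepsilon$ its first component is an orientation‑preserving diffeomorphism of $[t_0,t_1]$ fixing the endpoints, so $\gamma_\varepsilon$ reparametrizes to a genuine graph $u\mapsto(u,\theta_\varepsilon(u))$ without changing $\mathcal{A}_F$, and a short computation gives $\tfrac{d}{d\varepsilon}\big|_0\theta_\varepsilon=b-a\dot\theta=:w$, where $w$ vanishes at $t_0,t_1$ and ranges over \emph{all} such variations of $\theta$ as $(a,b)$ does (take $a=0$). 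Combining with the previous step, $d\mathcal{A}_F(\gamma)[V]=d\mathcal{A}_L(\theta)[w]$, so $d\mathcal{A}_F(\gamma)$ vanishes on all admissible $V$ exactly when $d\mathcal{A}_L(\theta)$ vanishes on all admissible $w$, i.e. exactly when $\theta$ is an Euler–Lagrange solution of $L$. (Equivalently: decompose $V=a\,\dot\gamma+(0,\,b-a\dot\theta)$, note that $a\,\dot\gamma$ generates a reparametrization and is hence a null direction for $\mathcal{A}_F$, and that $(0,w)$ is the variation of $\gamma$ induced by $\theta\mapsto\theta+\varepsilon w$.) Together with the reduction of the first paragraph this proves the proposition.

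I expect the only real friction to be this bookkeeping: justifying that competitor curves may be taken to be graphs — any curve $C^1$‑close to $\gamma$ has positive first‑coordinate velocity, hence reparametrizes to a graph, and $\mathcal{A}_F$ is insensitive to this by $1$‑homogeneity of $F$ — and that the induced variations of $\theta$ are unrestricted, together with invoking the standard equivalence ``reparametrized geodesic $\Longleftrightarrow$ critical point of length'' in the Finsler setting. There is no hard estimate here; the content is just the classical dictionary identifying the geodesic flow of the degree‑one Finsler Lagrangian $F$ on $S^1_t\times S^1_x$ with the time‑dependent Euler–Lagrange flow of $L$ on $S^1_x$, with the $t$‑coordinate playing the role of time.
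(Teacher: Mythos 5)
Your proposal is correct and follows essentially the same route as the paper's own proof: both identify the Finsler length of the graph curve with the Lagrangian action of $\theta$ (using $F(\gamma,\dot\gamma)=1\cdot L(t,\theta,\dot\theta)$ on the $v_1=1$ slice), and both handle the mismatch between variations of $\gamma$ and variations of $\theta$ by decomposing a variation field along $\gamma$ into a tangential component (a reparametrization, hence a null direction for the length functional) and a vertical component (which corresponds exactly to a variation of $\theta$). The paper writes this decomposition as $X=\lambda\dot\gamma+\mu e_2$ and verifies directly that the tangential piece gives zero first variation, which is the same bookkeeping you do with $V=a\dot\gamma+(0,b-a\dot\theta)$.
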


\begin{proof}
Observe that we have the following relation between the Lagrangian action $A_{L}$ and the Finsler length $l_F$.
\begin{flalign*}
A_{L}(\theta|_{[a,b]}) &= \int_a^b L(t, \theta(t), \theta'(t)) dt \\
&= \int_a^b F(t, \theta(t), 1, \theta'(t)) dt \\
&= \int_a^b F(\gamma(t), \dot \gamma(t)) dt \\
&= l_F(\gamma|_{[a,b]})
\end{flalign*}

Assume now that $\gamma:[a,b] \to \R^2$ is a reparametrization of an $F$-geodesic, 
i.e. $\partial_{s=0} l_F(\gamma_s) =0$ for any
proper variation of $\gamma$. Let $\theta_s : [a,b] \to \R$  be a proper variation of $\theta$. 
From $\theta_s$ we 
construct a proper variation $\gamma_s$ of $\gamma$ via 
$$
\gamma_s(t) = (t,\theta_s(t))
$$ 
Then we have $A_{L}(\theta_s) = l_F(\gamma_s)$ for every $s$, and hence we have 
$$
\partial_s|_{s=0} A_{L}(\theta_s) = \partial_s|_{s=0}l_F(\gamma_s) = 0.
$$
This proves one direction.
To prove the other direction assume now that $\theta: [a,b] \to \R$ is critical with respect to the Lagrangian action and 
let $X: [a,b] \to \R^2$ be a vector field along $\gamma$ with $X(a) = X(b) = 0$. Since 
$\dot \gamma(t) = (1,\theta'(t))$ the pair of vectors $\{\dot \gamma(t) , e_2 \}$ always forms a basis of $\R^2$. 
Thus, we can rewrite the vector field $X$ as 
$$
X(t) = \underbrace{\lambda(t) \dot \gamma(t)}_{A(t)} + \underbrace{\mu(t) e_2}_{B(t)}
$$
for functions $\lambda, \mu$ with $\lambda(a) = \lambda(b) = \mu(a) = \mu(b) = 0$.
Let $\gamma_s$ be a proper variation of $\gamma$ corresponding to the variational vector field $X$ and let $\beta_s$
be a proper variation of $\gamma$ corresponding to $B$, i.e.
$$
\partial_s|_{s=0} \gamma_s(t) = X(t) \quad \text{and} \quad \partial_s|_{s=0} \beta_s(t) = B(t)
$$
Observe that for small $|s|$ the curve $\eta_s: t \mapsto \gamma(t + s\lambda(t))$ is a reparametrization of $\gamma$ 
and hence has length independent of $s$. Thus

\begin{flalign*}
0 &= \partial_s|_{s=0} l_F(\eta_s) \\
&=  \partial_s|_{s=0} \int_a^b F(\eta_s(t), \partial_t \eta_s(t)) dt \\ 
&=  \int_a^b \partial_s|_{s=0} F(\eta_s(t), \partial_t \eta_s(t)) dt \\ 
&= \int_a^b  \partial_1 F(\eta_0(t), \partial_t \eta_0(t)) \partial_s|_{s=0} \eta_s(t) 
     + \partial_2 F(\eta_0(t), \partial_t \eta_0(t)) \partial_s|_{s=0} \partial_t \eta_s(t) dt \\ 
&= \int_a^b  \partial_1 F(\gamma(t), \partial_t \gamma(t)) \partial_s|_{s=0} \eta_s(t) 
     + \partial_2 F(\gamma(t), \partial_t \gamma(t)) \partial_t  \partial_s|_{s=0} \eta_s(t) dt \\ 
&= \int_a^b  \partial_1 F(\gamma(t), \partial_t \gamma(t)) A(t)
     + \partial_2 F(\gamma(t), \partial_t \gamma(t)) \dot A(t) dt 
\end{flalign*}

Hence it follows that 

\begin{flalign*}
\partial_s|_{s=0} l_F(\gamma_s) &= \partial_s|_{s=0} \int_a^b F(\gamma_s(t), \dot \gamma_s(t)) dt \\
&= \int_a^b \partial_1 F(\gamma_0(t), \dot \gamma_0(t)) \partial_s|_{s=0} \gamma_s(t) 
+ \partial_2 F(\gamma_0(t), \dot \gamma_0(t)) \partial_s|_{s=0} \partial_t \gamma_s(t) \\
&= \int_a^b \partial_1 F(\gamma(t), \dot \gamma(t)) \partial_s|_{s=0} \gamma_s(t) 
+ \partial_2 F(\gamma(t), \dot \gamma(t)) \partial_t \partial_s|_{s=0} \gamma_s(t) \\
&= \int_a^b \partial_1 F(\gamma(t), \dot \gamma(t)) X(t) 
+ \partial_2 F(\gamma(t), \dot \gamma(t)) \dot X(t) \\
&= \int_a^b \partial_1 F(\gamma(t), \dot \gamma(t)) (A(t) + B(t))
+ \partial_2 F(\gamma(t), \dot \gamma(t)) (\dot A(t) + \dot B(t)) \\
&= \int_a^b \partial_1 F(\gamma(t), \dot \gamma(t)) B(t)
+ \partial_2 F(\gamma(t), \dot \gamma(t)) \dot B(t) \\
&= \partial_s|_{s=0} l_F(\beta_s)
\end{flalign*}

Consequently, the curve $\gamma$ is critical with respect to the Finsler length if
$$
\partial_s |_{s=0} l_F(\beta_s) = 0
$$
for every proper variation $\beta_s$, which varies $\gamma$ only in $e_2$ direction, i.e. $\beta_s$ is of the form
$$
\beta_s(t) = (t,\theta_s(t))
$$
For those variations we have already computed that if $\theta_s$ is critical with respect to the $L$-action then 
$\beta_s$ is critical with respect to the Finsler length.

\end{proof}

We define sets $V, V_0 \subset ST^2$ via 
$$
V = \{ (x,v) \in ST^2 \mid x \in T^2, v_1 > 0 \}, \quad V_0 = \{ (0,h, v) \in ST^2 \mid h \in S^1, v_1 > 0 \}
$$ 

From the completeness of the time-dependent Euler-Lagrange flow of $L$ and proposition 
\ref{schroeder2} it follows that the lifts to $\R^2$ of geodesics $c_v$ with $v_1 > 0$ 
are graphs over the euclidean line $\R \cdot e_1 \subset \R$ and pass through the section $V_0$ after
finite time. 
Thus, the first return map $R: V_0 \to V_0$ of the geodesic flow is well-defined.

\begin{prop}
The return map $R: V_0 \to V_0$ is conjugated via a diffeomorphism to the twist map $\hat f$.
\end{prop}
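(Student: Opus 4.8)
The plan is to realise the conjugacy as a composition of two elementary coordinate changes on $V_0$: a \emph{slope chart} $\Phi \colon V_0 \to Z$ recording the graph-slope of a geodesic at the section, followed by the Legendre transform relating the Euler--Lagrange flow of $L$ to the Hamiltonian flow of $H$. First I would define $\Phi \colon V_0 \to Z = S^1 \times \R$ by $\Phi(0,h,v) = (h,\, v_2/v_1)$. This is a diffeomorphism, with inverse $\Phi^{-1}(x,y) = \big(0,\, x,\, (1,y)/F(0,x,1,y)\big)$: since $F(0,x,1,y) = L(0,x,y) > 0$, the vector $(1,y)/F(0,x,1,y)$ lies in $ST^2$, has positive first component, and depends smoothly on $(x,y)$, while $\Phi$ itself is visibly smooth.

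Next I would unwind the return map in these coordinates. Fix $q = (0,h,v) \in V_0$ and let $c_v$ be the geodesic with initial velocity $q$. By Proposition \ref{schroeder2} and the completeness recorded above, the lift of $c_v$ to $\R^2$ is a reparametrisation of a graph $s \mapsto \gamma(s) = (s, \theta(s))$ over $\R\cdot e_1$, where $\theta$ is the Euler--Lagrange solution of $L$ — equivalently of $\hat L$, the two Lagrangians sharing the same time-dependent Euler--Lagrange flow — with $\theta(0) = h$ and $\theta'(0) = v_2/v_1$. Since $L$ is $1$-periodic in its first argument, the projection of $\gamma$ to $T^2$ returns to the section $\{t = 0\}$ for the first time at parameter $s = 1$, sitting there at $(0,\theta(1))$ with tangent direction proportional to $(1,\theta'(1))$; this direction has positive first component because $\gamma$ is a graph over $\R\cdot e_1$, so the associated $F$-unit vector lies in $V_0$ and equals $R(q)$. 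Hence
$$
\Phi \circ R \circ \Phi^{-1} \colon (x,y) \longmapsto (\theta(1),\, \theta'(1)),
$$
where $\theta$ solves the Euler--Lagrange equation of $\hat L$ with $(\theta(0),\theta'(0)) = (x,y)$; in other words $\Phi \circ R \circ \Phi^{-1}$ is precisely the time-$1$ map of the Euler--Lagrange flow of $\hat L$, expressed in position--velocity coordinates on $Z$.

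It remains to pass to the Hamiltonian picture. The Legendre transform $\Leg_0 \colon Z \to Z$ intertwines the time-$1$ map of the Euler--Lagrange flow of $\hat L$ (in position--velocity coordinates) with the time-$(0,1)$ map $\psi^{0,1}_H$ of the Hamiltonian flow of $H$ (in position--momentum coordinates); here one uses that $\hat L$ is $1$-periodic in $t$, so the Legendre transform at time $1$ coincides with that at time $0$. By Theorem \ref{moserthm}, $\psi^{0,1}_H = \hat f$. Combining the two steps yields $\Phi \circ R \circ \Phi^{-1} = \Leg_0 \circ \hat f \circ \Leg_0^{-1}$, so $R$ is conjugate to $\hat f$ via the diffeomorphism $\Leg_0^{-1} \circ \Phi \colon V_0 \to Z$.

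The step requiring the most care is the middle one: verifying that the first return of the projected graph to $\{t = 0\}$ occurs at parameter exactly $1$ — this uses the $1$-periodicity of $L$ in its first variable together with the strict monotonicity of the first coordinate along a graph over $\R\cdot e_1$ — and checking that the tangent direction recorded by $R$ is exactly $(1,\theta'(1))$ up to normalisation, independently of the arc-length parametrisation of $c_v$. The other ingredients (bijectivity of $\Phi$, the agreement of the Euler--Lagrange flows of $L$ and $\hat L$, and the Legendre intertwining) are either already established in the text or standard.
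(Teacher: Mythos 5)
Your proof is correct and follows essentially the same route as the paper: pass to the slope chart $\Phi(0,h,v) = (h, v_2/v_1)$ on $V_0$, use Proposition \ref{schroeder2} and the periodicity of $L$ in $t$ to identify $\Phi \circ R \circ \Phi^{-1}$ with the time-$1$ map of the Euler--Lagrange flow of $\hat L$ in position--velocity coordinates, and then relate this to $\hat f = \psi^{0,1}_H$. The one place where you do more work than the paper is the last step: you explicitly insert the time-$0$ Legendre transform $\Leg_0$ to intertwine the Euler--Lagrange time-$1$ map (velocity variable) with the Hamiltonian time-$1$ map (momentum variable), arriving at the conjugating map $\Leg_0^{-1}\circ\Phi$ rather than $\Phi$ itself. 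The paper simply presents $g = \Phi$ as the conjugating diffeomorphism, which tacitly identifies $\hat f$ with its Legendre conjugate $\Leg_0^{-1}\circ\hat f\circ\Leg_0$; since $\Leg_0$ is the identity for large $|y|$ and conjugacy is all the proposition asserts, this is harmless, but your bookkeeping is the more careful statement. One small note of caution: your displayed identity $\Phi\circ R\circ\Phi^{-1} = \Leg_0\circ\hat f\circ\Leg_0^{-1}$ presupposes the convention $\Leg_0\colon(x,p)\mapsto(x,y)$ (momentum to velocity); with the opposite and more common convention $\Leg_0\colon(x,y)\mapsto(x,\partial_y\hat L)$ the factors of $\Leg_0$ and $\Leg_0^{-1}$ swap and the conjugating map becomes $\Leg_0\circ\Phi$. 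Either way the conclusion holds, but it is worth fixing one convention and checking the order of composition against it.
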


\begin{proof}
To see that the return map $R$ is conjugated to $\hat f$  observe that $R$ is given by
\begin{equation} \label{eq:returnmap}
R(0,h,v_1,v_2) = \left( 0, \theta(1), \frac{(1,\theta'(1))}{F(0,\theta(1),1,\theta'(1))}\right)
\end{equation}
where $\theta: \R \to \R$ is the Euler-Lagrange solution of the lifted Lagrangian $L$ with 
$\theta(0) = h$ and $\theta'(0) = \tfrac{v_2}{v_1}$. This is true because after proposition 
\ref{schroeder2} the curve $\gamma: \R \to \R^2$ with 
$\gamma(t) = (t, \theta(t))$ is a reparametrized lift of the geodesic $c: \R \to T^2$ with initial values
$c(0) = (0,h)$ and $\dot c(0) = (v_1,v_2)$. The reparametrized lift $\gamma$ passes through a translate
$V_0 + e_1$ of $V_0$ again for the first time at time $t=1$ and thus the return map $R$ maps 
$(0,h,v_1,v_2)$ to $(\gamma(1), \tfrac{\dot \gamma(1)}{F(\gamma(1),\dot \gamma(1)})$, which is equal 
to the expression in equation \eqref{eq:returnmap}.
The diffeomorphism $g: V_0 \to Z$ conjugating $R$ and $\hat f$ is given by
$$
g(0,h,v_1,v_2) = \left( h, \frac{v_2}{v_1}\right)
$$
with inverse
$$
g^{-1}(x,y) = \left( 0, x, \frac{(1,y)}{F(0,x,1,y)}\right)
$$

\end{proof}

\begin{prop}
The first return map $R$ has positive metric entropy.
\end{prop}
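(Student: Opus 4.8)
The plan is to transfer the positive metric entropy of $\hat f$ across the conjugacy $g:V_0\to Z$ of the previous proposition. I would equip the section $V_0$ with the pulled-back structures $\rho:=g^*\rho_{\mathrm{eucl}}$ and $\omega_0:=g^*(dx\wedge dy)$, where $\rho_{\mathrm{eucl}}$ and $dx\wedge dy$ denote the Euclidean metric and the standard symplectic form on $Z$. Then $g$ becomes simultaneously an isometry and a symplectomorphism, and it still satisfies $g\circ R=\hat f\circ g$. Since $R$ preserves $\omega_0$, this is a legitimate reference area form on $V_0$ for the notion of metric entropy (alternatively one could use the area form induced on the transverse section $V_0$ by the Liouville volume of $\phi^t$; the argument below is insensitive to this choice).

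First, because $g$ is an isometry that intertwines $R$ and $\hat f$, for every $x\in V_0$ and $n\in\N$ the differentials $DR^n(x)$ and $D\hat f^n(g(x))$ differ only by pre- and post-composition with linear isometries; hence $\Vert DR^n(x)\Vert_\rho=\Vert D\hat f^n(g(x))\Vert_{\rho_{\mathrm{eucl}}}$ and therefore $\lambda_R(x)=\lambda_{\hat f}(g(x))$ for all $x$. Since $g$ is moreover a symplectomorphism, $g_*\mu_{\omega_0}=\mu_{dx\wedge dy}$, and the change-of-variables formula for push-forward measures gives
$$h_{\omega_0}(R)=\int_{V_0}\lambda_R\,d\mu_{\omega_0}=\int_{V_0}\lambda_{\hat f}\circ g\,d\mu_{\omega_0}=\int_{Z}\lambda_{\hat f}\,d\mu_{dx\wedge dy}=h_{dx\wedge dy}(\hat f),$$
which is positive by Theorem \ref{bergerturaev} applied to the construction of $\hat f$ in Section 2. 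If one instead wishes to use the Liouville-induced area form $\omega_L$ on $V_0$, one proceeds a little more indirectly: $R$ preserves $\omega_0$, so $\Vert DR^n(x)\Vert_\rho\ge 1$ and $\lambda_R\ge 0$ everywhere; the set $g^{-1}(I)$, where $I\subset Z$ is the stochastic island from Remark \ref{btrmk}, is $R$-invariant, has positive $\mu_{\omega_L}$-measure (since $g$ is a diffeomorphism and $\mu_{dx\wedge dy}(I)>0$), and carries $\lambda_R=\lambda_{\hat f}\circ g>0$; hence $h_{\omega_L}(R)\ge\int_{g^{-1}(I)}\lambda_R\,d\mu_{\omega_L}>0$ as well.

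The only point that really needs care is the choice of Riemannian metric used to measure $\Vert DR^n\Vert$: over a non-compact phase space the quantity $\int\lambda\,d\mu_\omega$ is not automatically a conjugacy invariant, because a diffeomorphism between non-compact surfaces need not be bi-Lipschitz for arbitrarily chosen background metrics. Taking the metric on $V_0$ to be the pull-back $g^*\rho_{\mathrm{eucl}}$ makes this difficulty vanish outright; alternatively one uses that the stochastic island $I$ is bounded — the perturbation producing $\hat f$ is supported near the orbit of the fixed point $(0,0)$ — so that the comparison of metrics takes place on the compact set $g^{-1}(\overline I)$. Everything else (invariance of $\omega_0$, measurability and non-negativity of $\lambda_R$, and positivity of the Lebesgue measure of $g^{-1}(I)$) is routine.
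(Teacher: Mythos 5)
Your proposal is correct and, in its secondary version using the Liouville-induced area form $\omega_L$, it coincides with the paper's own argument: one shows that $g^{-1}(I)$ is $R$-invariant with positive $\omega_L$-measure (the paper makes this explicit by writing $g^*(dx\wedge dy)=j\cdot\omega_L$ with $j>0$; you simply note that a diffeomorphism carries positive-measure sets to positive-measure sets) and that every point of $g^{-1}(I)$ has positive maximal Lyapunov exponent by conjugacy. The extra care you take about the Riemannian metric used to measure $\Vert DR^n\Vert$ — either pulling back $\rho_{\mathrm{eucl}}$ via $g$ so that $g$ is an isometry, or observing that the stochastic island $I$ is bounded so the comparison of norms happens on the compact set $g^{-1}(\overline I)$ — is a genuine improvement: the paper tacitly asserts that the Lyapunov exponent ``remains positive'' under the conjugacy, which on a non-compact section does require exactly the justification you supply.
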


\begin{proof}
Let $g: V_0 \to Z$ be the diffeomorphism conjugating $R$ to $\hat f$. Let $I \subset Z$ denote 
the stochastic island for $\hat f$, i.e. every point $x \in I$ has positive maximal Lyapunov exponent
and $\Area(I) = \int_I | dx \wedge dy | > 0$. From the conjugacy of $\hat f$ and $R$ it follows that the 
maximal Lyapunov exponent of every $v \in g^{-1}(I)$ remains positive. Let $\omega$ denote the area form of $V_0$ obtained by
restricting the differential $d\lambda$ of the standard Liouville form $\lambda$ to $V_0$. The return
map $R$ preserves $\omega$. Since the pullback $g^*(dx \wedge dy)$ and $\omega$ are both 
area forms there is a positive function $j: V_0 \to \R_{>0}$, such that
$$
j \cdot \omega = g^*(dx \wedge dy)
$$
Since $j$ is positive we have that $\int_{g^{-1}(I)} \omega \neq 0$ if and only if 
$\int_{g^{-1}(I)} j \cdot  \omega \neq 0$. And since
$$
\int_{g^{-1}(I)} j \cdot \omega = \int_{g^{-1}(I)} g^*(dx \wedge dy) = \int_I dx \wedge dy
$$
we have that $\Area(g^{-1}(I)) > 0$. Consequently, $R$ has positive metric entropy. 
\end{proof}


\end{document}